\theoremstyle{plain}   
\newtheorem{bigthm}{Theorem}   
\newtheorem{theorem}[equation]{Theorem}  
\newtheorem{lemma}[equation]{Lemma}         
\newtheorem{proposition}[equation]{Proposition} 
\theoremstyle{remark}
\newtheorem{remark}[equation]{Remark}
\subjclass[2010]{Primary 19D55; Secondary 55P43}
\begin{document}

\title{Algebraic $K$-theory of planar cuspidal curves}

\author{Lars Hesselholt}
\address{Nagoya University, Japan, and University of Copenhagen, Denmark}
\email{larsh@math.nagoya-u.ac.jp}

\author{Thomas Nikolaus}
\address{Universit\"{a}t M\"{u}nster, Germany}
\email{nikolaus@uni-muenster.de}


\maketitle

\section*{Introduction}

The purpose of this paper is to evaluate the algebraic $K$-groups of a
planar cuspidal curve over a perfect $\mathbb{F}_p$-algebra relative
to the cusp point. A conditional calculation of these 
groups was given in~\cite[Theorem~A]{h10}, assuming a conjecture on
the structure of certain polytopes. Our calculation here,
however, is unconditional and illustrates the advantage of the new
setup for topological cyclic homology by
Nikolaus--Scholze~\cite{nikolausscholze}, which we will be using
throughout. The only input necessary for our calculation is the
evaluation by the Buenos Aires Cyclic Homology group~\cite{bach1} and 
Larsen~\cite{larsen} of the structure of the Hochschild complex of the
coordinate ring as a mixed complex, that is, as an object of the
$\infty$-category of chain complexes with circle action.

We consider the planar cuspidal curve ``$y^a = x^b$,'' where
$a,b \geq 2$ are relatively prime integers. For $m \geq 0$, we define
$\ell(a,b,m)$ to be the number of pairs $(i,j)$ of positive integers
such that $ai+bj = m$, and for $r \geq 0$, define $S(a,b,r)$ to be
the set of positive integers $m$ such that $\ell(a,b,m) \leq r$. The
subset $S = S(a,b,r) \subset \mathbb{Z}_{>0}$ is a truncation set in
the sense that if $m \in S$ and $d$ divides $m$, then
$d \in S$, and hence, the ring of (big) Witt vectors $\mathbb{W}_S(k)$
with underlying set $k^S$ is defined. We refer to~\cite[Section~1]{h9}
for a detailed introduction to Witt vectors.

\begin{bigthm}\label{thm:cusps}Let $k$ be a perfect
$\mathbb{F}_p$-algebra, and let $a, b \geq 2$ be relatively prime
integers. There is a canonical isomorphism
$$K_j(k[x,y]/(y^a-x^b),(x,y)) \simeq
\mathbb{W}_S(k)/(V_a\mathbb{W}_{S/a}(k) + V_b\mathbb{W}_{S/b}(k)),$$
if $j = 2r \geq 0$ with $S = S(a,b,r)$, and the remaining $K$-groups
are zero.
\end{bigthm}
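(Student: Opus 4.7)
The plan is to compute $K(R, (x,y))$ using the cyclotomic trace and the Nikolaus--Scholze description of topological cyclic homology. The foundation is the identification of $R = k[x,y]/(y^a - x^b)$ with the pointed monoid algebra $k[\Pi]$, where $\Pi \subset \mathbb{N}$ is the numerical semigroup generated by $a$ and $b$ (with $x \leftrightarrow a$ and $y \leftrightarrow b$). This gives a weight grading on $R$, and hence a weight decomposition of $\operatorname{THH}(R)$ as a cyclotomic spectrum.

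First, by a Dundas--Goodwillie--McCarthy-style argument for the augmented $\mathbb{F}_p$-algebra $R \to k$ together with Gabber--Suslin rigidity, the cyclotomic trace gives an equivalence $K(R, (x,y)) \simeq \operatorname{TC}(R, (x,y))$. Next, I would describe $\operatorname{THH}(R)$ as a cyclotomic spectrum via the weight decomposition $\bigoplus_{m \in \Pi} \operatorname{THH}(R)(m)$, which is $S^1$-equivariant and has $m = 0$ summand identified with $\operatorname{THH}(k)$. The BACH/Larsen description of the mixed complex $\operatorname{HH}(R/k)$ determines each weight-$m$ piece explicitly: it is supported in two consecutive degrees, with rank governed by the lattice points $\{(i,j) \in \mathbb{Z}_{>0}^2 : ai + bj = m\}$ of cardinality $\ell(a,b,m)$, and its Connes $B$-operator is fully described. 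Since $k$ is a perfect $\mathbb{F}_p$-algebra, this determines the $S^1$-equivariant homotopy type; moreover, the cyclotomic Frobenius $\varphi_p$ arises from the diagonal on $\Pi$, carrying weight $m$ into weight $pm$ in the Tate construction.

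With this input in hand, I would apply the Nikolaus--Scholze fiber formula $\operatorname{TC} \simeq \operatorname{fib}(\varphi - \operatorname{can})$ to $\operatorname{THH}(R, (x,y))$, evaluated weight by weight. Weights organize into $p$-adic orbits $\{m, pm, p^2m, \ldots\}$ with $(m,p) = 1$, and assembling the fiber along each such orbit identifies the contribution to $\pi_{2r}$ as a piece of $\mathbb{W}_{S(a,b,r)}(k)$. The two monoid generators induce multiplication by the Verschiebung operators $V_a$ and $V_b$ on Witt coordinates, and the sole relation $y^a = x^b$ (both of weight $ab$) accounts for the quotient by $V_a\mathbb{W}_{S/a}(k) + V_b\mathbb{W}_{S/b}(k)$. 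The truncation condition $\ell(a,b,m) \leq r$ should emerge as a spectral-sequence bound: the weight-$m$ class survives in degree $2r$ precisely when the combined action of $B$ and $\varphi_p$ has not yet annihilated it.

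The main obstacle is this final assembly step: showing that the cyclotomic Frobenius--Verschiebung bookkeeping matches exactly the quotient of big Witt vectors in the theorem, with the truncation set $S(a,b,r)$ appearing naturally from the combinatorial count of surviving weight-$m$ classes in degree $2r$. In \cite{h10} this step relied on an unproven polytope conjecture; the advantage of the Nikolaus--Scholze framework is that the weight-shifting structure of $\varphi_p$ makes the identification accessible directly, but a careful compatibility check between the cyclotomic structure on $\operatorname{THH}(k[\Pi])$ and the Witt-vector functoriality, in particular the interaction of $V_a$ and $V_b$ with Frobenius, remains the technical heart of the argument.
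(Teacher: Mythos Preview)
Your overall strategy---weight decomposition of $\operatorname{THH}$ via the monoid structure, the BACH/Larsen identification of the mixed complex, and the Nikolaus--Scholze equalizer formula organized along $p$-adic orbits---is the same as the paper's. But your first step has a genuine gap. The Dundas--Goodwillie--McCarthy theorem applies to nilpotent ideals, and $(x,y) \subset R$ is not nilpotent; Gabber rigidity requires a henselian pair, which $(R,(x,y))$ is not for a general perfect $\mathbb{F}_p$-algebra $k$. The paper bypasses this by applying the birelative theorem of Land--Tamme to the normalization $A = k[t^a,t^b] \hookrightarrow B = k[t]$: the square with vertices $K(A)$, $K(B)$, $\operatorname{TC}(A)$, $\operatorname{TC}(B)$ is cartesian, and since $K(B) \to K(k)$ is an equivalence, one gets $K(A,(x,y)) \simeq \operatorname{fib}(\operatorname{TC}(A) \to \operatorname{TC}(B))$. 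Note that this fiber is \emph{not} $\operatorname{TC}(A,(x,y))$, because $\operatorname{TC}$ is not $\mathbb{A}^1$-invariant; so the object you propose to compute is not the one that matches the relative $K$-theory.

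This change also affects the assembly. The paper works with the weight-$m$ piece $B_m$ of the quotient $B^{\operatorname{cy}}(\langle t \rangle)/B^{\operatorname{cy}}(\langle t^a,t^b\rangle)$, which BACH/Larsen identifies as the total cofiber of a square of orbits $(\mathbb{T}/C_{m/c})_+[2\ell(a,b,m)]$ for $c \in \{1,a,b,ab\}$. A duality equivalence $(X \otimes (\mathbb{T}/C_n)_+)^{h\mathbb{T}} \simeq X^{hC_n}[1]$ (and its Tate analogue) then converts the $\mathbb{T}$-homotopy and $\mathbb{T}$-Tate fixed points of each corner directly into $\operatorname{THH}(k)^{hC_{p^v}}$ and $\operatorname{THH}(k)^{tC_{p^v}}$, whose homotopy groups are known Witt vectors. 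The quotient by $V_a$ and $V_b$ thus comes from the corestriction maps in that square, not from ``monoid generators acting by Verschiebung'' as you suggest, and with this in place the equalizer of $\varphi$ and $\operatorname{can}$ along each $p$-adic orbit is a direct case-by-case check rather than an open ``compatibility'' problem.
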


We remark that recently Angeltveit ~\cite{angeltveit} has given a
different proof of this result that, unlike ours, employs equivariant
homotopy theory. We also remark that the strategy employed in this
paper was used by Speirs~\cite{speirs} to significantly simplify the
calculation in~\cite{hm1} of the relative algebraic $K$-groups of a
truncated polynomial algebra over a perfect $\mathbb{F}_p$-algebra.

We recall from~\cite[Section~1]{h10} that the group in the
statement is a module over the ring $\mathbb{W}(k)$ of big Witt
vectors in $k$ of finite length $\frac{1}{2}(2r+1)(a-1)(b-1)$. The
calculation of the length goes back to
Sylvester~\cite{sylvester}. Moreover, it admits a $p$-typical product
decomposition indexed by positive integers $m'$ not divisible by
$p$. To state this, we let $s = s(a,b,r,p,m')$ be the unique integer 
such that
$$\ell(a,b,p^{s-1}m') \leq r < \ell(a,b,p^sm'),$$
if such an integer exists, and $0$, otherwise, and assume
(without loss of generality) that
$p$ does not divide $b$ and write $a = p^ua'$ with $a'$ not divisible
by $p$. Then
$$\textstyle{
\mathbb{W}_S(k)/(V_a\mathbb{W}_{S/a}(k) + V_b\mathbb{W}_{S/b}(k))
\simeq \prod_{m' \in \mathbb{N}'} W_h(k) }$$
by a canonical isomorphism, where
$$h = h(a,b,r,p,m') = \begin{cases}
s, & \text{if neither $a'$ nor $b$ divides $m'$,} \cr
\min\{s,u\}, & \text{if $a'$ but not $b$ divides $m'$,} \cr
0, & \text{if $b$ divides $m'$.} \cr
\end{cases}$$
Here we write $\mathbb{N}'$ for the set of positive integers not
divisible by $p$. 

It is a pleasure to acknowledge the generous support that we have
received while working on this paper. Hesselholt was funded in part by
the Isaac Newton Institute as a Rothschild Distinguished Visiting
Fellow and by the Mathematical Sciences Research Institute 
as a Simons Visiting Professor. Nikolaus was funded in part by the
Deutsche Forschungsgemeinschaft under Germany's Excellence Strategy
EXC 2044 Ð390685587, Mathematics M\"unster:
Dynamics--Geometry--Structure. Finally, we are grateful to Tyler
Lawson for pointing out that our arguments in an earlier version of
this paper could be simplified significantly. 

\section{Some recollections necessary for the
proof}\label{sec:recollections} 

We first recall the Nikolaus--Scholze formula for topological cyclic
homology from~\cite{nikolausscholze}; see
also~\cite{hesselholtnikolaus}. We write $\mathbb{T}$ for the circle
group and $C_p \subset \mathbb{T}$ for the subgroup of order $p$.
If $R$ is a ring, then we write
$$\begin{xy}
(0,0)*+{ \operatorname{TC}_{\phantom{-}}^{-}(R) }="1";
(25,0)*+{ \operatorname{TP}(R) }="2";
{ \ar^-{\operatorname{can}} "2";"1";};
\end{xy}$$
for the canonical map from the homotopy fixed points to the Tate
construction of the spectrum with $\mathbb{T}$-action
$\operatorname{THH}(R)$. The Frobenius map
$$\xymatrix{
{ \operatorname{THH}(R) } \ar[r]^-{\varphi} &
{ \operatorname{THH}(R)^{tC_p} } \cr
}$$
is $\mathbb{T}$-equivariant, provided that we let $\mathbb{T}$ act on
the target through the isomorphism
$\rho \colon \mathbb{T} \to \mathbb{T}/C_p$ given by the $p$th
root, and therefore, it induces a map
$$\begin{xy}
(0,0)*+{ \operatorname{TC}^{-}(R) = \operatorname{THH}(R)^{h\mathbb{T}} }="1";
(45,0)*+{ (\operatorname{THH}(R)^{tC_p})^{h(\mathbb{T}/C_p)}. }="2";
{ \ar^-{\varphi^{h\hskip.5pt\mathbb{T}}} "2";"1";};
\end{xy}$$
The Tate-orbit lemma~\cite[I.2.1, II.4.2]{nikolausscholze} identifies
the $p$-completion of the target of this map with that of
$\operatorname{TP}(R)$, and Nikolaus--Scholze show that, after
$p$-completion, the topological cyclic homology of $R$ is the
equalizer
$$\begin{xy}
(0,0)*+{ \operatorname{TC}(R) }="1";
(24,0)*+{ \operatorname{TC}^{-}(R) }="2";
(48,0)*+{ \operatorname{TP}(R) }="3";
{ \ar "2";"1";};
{ \ar@<.7ex>^-{\varphi} "3";"2";};
{ \ar@<-.7ex>_-{\operatorname{can}} "3";"2";};
\end{xy}$$
of these two parallel maps. If $p$ is nilpotent in $R$, as is the case
in the situation that we consider, then the spectra in question are
already $p$-complete.

The normalization of $A = k[x,y]/(y^a-x^b)$ is the $k$-algebra
homomorphism to $B = k[t]$ that to $x$ and $y$ assigns $t^a$ and
$t^b$, respectively, and this homomorphism identifies $A$ with
sub-$k$-algebra $k[t^a,t^b] \subset k[t] = B$. In this situation, the
square
$$\begin{xy}
(0,7)*+{ K(A) }="11";
(25,7)*+{ \operatorname{TC}(A) }="12";
(0,-7)*+{ K(B) }="21";
(25,-7)*+{ \operatorname{TC}(B) }="22";
{ \ar "12";"11";};
{ \ar "21";"11";};
{ \ar "22";"12";};
{ \ar "22";"21";};
\end{xy}$$
is cartesian. This follows from the birelative theorem, which has now
been given a very satisfying conceptual proof by
Land--Tamme~\cite{landtamme}. (By contrast, the original proof in the
rational case by Corti\~{n}as~\cite{cortinas} and the subsequent proof
in the $p$-adic case by Geisser--Hesselholt both required rather
elaborate calculational input.) Now, the map $K(B) \to K(k)$ induced
by the $k$-algebra homomorphism that to $t$ assigns $0$ is an
equivalence, and hence, the relative $K$-groups that we wish to
determine are canonically identified with the homotopy groups of the
common fibers of the vertical maps in the diagram above.

The $k$-algebras $A$ and $B$ are both monoid algebras. In general, if
$k[\Pi]$ is the monoid algebra of an $\mathbb{E}_1$-monoid $\Pi$ in
spaces, then, as cyclotomic spectra,
$$\operatorname{THH}(k[\Pi]) \simeq \operatorname{THH}(k \otimes
\mathbb{S}[\Pi]) \simeq \operatorname{THH}(k) \otimes
B^{\operatorname{cy}}(\Pi)_+,$$
where $B^{\operatorname{cy}}(\Pi)$ denotes the unstable cyclic
bar-construction of $\Pi$. In addition, on the right-hand side, the
Frobenius map factors as a composition
$$\begin{aligned}
{} & \xymatrix{
{ \operatorname{THH}(k) \otimes B^{\operatorname{cy}}(\Pi)_+ }
\ar[r]^-{\varphi \otimes \tilde{\varphi}} &
{ \operatorname{THH}(k)^{tC_p} \otimes
B^{\operatorname{cy}}(\Pi)_+^{hC_p} } \cr } \cr
{} & \xymatrix{
{ \phantom{ \operatorname{THH}(k) \otimes
B^{\operatorname{cy}}(\Pi)_+} } \ar[r]^-{\operatorname{can}} & 
{ (\operatorname{THH}(k) \otimes B^{\operatorname{cy}}(\Pi)_+)^{tC_p}
} \cr } \cr
\end{aligned}$$
of the map induced by the Frobenius
$\varphi \colon \operatorname{THH}(k) \to
\operatorname{THH}(k)^{tC_p}$ and the unstable Frobenius
$\tilde{\varphi} \colon B^{\operatorname{cy}}(\Pi) \to
B^{\operatorname{cy}}(\Pi)^{hC_p}$ and a canonical map. Importantly, we
have a map of spectra with  $\mathbb{T}$-action
$$\xymatrix{
{ \mathbb{Z} } \ar[r] &
{ \mathbb{Z}_p \simeq \tau_{\geq 0}\operatorname{TC}(k) } \ar[r] &
{ \operatorname{THH}(k) } \cr
}$$
from $\mathbb{Z}$ with (necessarily) trivial $\mathbb{T}$-action, and
therefore, we can rewrite
$$\operatorname{THH}(k) \otimes B^{\operatorname{cy}}(\Pi)_+
\simeq \operatorname{THH}(k) \otimes_{\mathbb{Z}} 
\mathbb{Z} \otimes B^{\operatorname{cy}}(\Pi)_+.$$
Accordingly, we do not need to understand the homotopy type of the
space with $\mathbb{T}$-action $B^{\operatorname{cy}}(\Pi)$. It
suffices to understand the homotopy type of the chain complex with
$\mathbb{T}$-action $\mathbb{Z} \otimes B^{\operatorname{cy}}(\Pi)_+$,
which, in the case at hand, is exactly what the Buenos Aires Cyclic
Homology group~\cite{bach1} and Larsen~\cite{larsen} have done for
us.

To state their result, we let
$\langle t^a,t^b \rangle \subset \langle t \rangle$ be the free
monoid on a generator $t$ and the submonoid generated by $t^a$ and
$t^b$, respectively, and set
$$B^{\operatorname{cy}}(\langle t \rangle, \langle t^a,t^b \rangle) =
B^{\operatorname{cy}}(\langle t \rangle) /
B^{\operatorname{cy}}(\langle t^a,t^b \rangle).$$
Counting powers of $t$ gives a $\mathbb{T}$-equivariant decomposition
of pointed spaces
$$\textstyle{ B^{\operatorname{cy}}(\langle t \rangle, \langle t^a,t^b
  \rangle) \simeq \bigvee_{m \in \mathbb{Z}_{>0}}
  B^{\operatorname{cy}}(\langle t \rangle, \langle t^a,t^b
  \rangle;m). }$$
We can now state the result of the calculation by the Buenos Aires
Cyclic Homology group~\cite{bach1} and by Larsen~\cite{larsen} as
follows. \goodbreak

\begin{theorem}\label{thm:buenosaires}Let $a,b \geq 2$ be
relatively prime integers, and let $m \geq 1$ be an integer. In the
$\infty$-category $D(\mathbb{Z})^{B\hspace{.5pt}\mathbb{T}}$ of chain complexes
with $\mathbb{T}$-action, there is a canonical equivalence between
$$\mathbb{Z} \otimes B^{\operatorname{cy}}(\langle t \rangle,\langle
t^a,t^b \rangle;m)$$
and the total cofiber of the square
$$\xymatrix{
{ \mathbb{Z} \otimes (\mathbb{T}/C_{m/ab})_+[2\ell(a,b,m)] } \ar[r]
\ar[d] &
{ \mathbb{Z} \otimes (\mathbb{T}/C_{m/a})_+[2\ell(a,b,m)] } \ar[d] \cr
{ \mathbb{Z} \otimes (\mathbb{T}/C_{m/b})_+[2\ell(a,b,m)] } \ar[r] &
{ \mathbb{Z} \otimes (\mathbb{T}/C_m)_+[2\ell(a,b,m)]. } \cr
}$$
Here, all maps in the square are induced by the respective canonical
projections, and if $c$ does not divide $m$, then
$\mathbb{T}/C_{m/c}$ is understood to be the empty space.
\end{theorem}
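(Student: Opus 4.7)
The strategy is to derive the equivalence from the Buenos Aires--Larsen computation of the Hochschild mixed complex of $A = k[x,y]/(y^a - x^b)$, reorganized into the language of cyclic bar constructions with $\mathbb{T}$-action. First I would apply Burghelea's identification
$$HH(k[\Pi]/k) \simeq k \otimes_{\mathbb{Z}} \mathbb{Z} \otimes B^{\operatorname{cy}}(\Pi)_+$$
in $D(k)^{B\mathbb{T}}$ to both $\Pi = \langle t \rangle$ and $\Pi = \langle t^a, t^b\rangle$, so that the theorem becomes, weight by weight, an identification of the relative object $HH(k[\langle t \rangle] / k[\langle t^a, t^b \rangle])$ with the claimed total cofiber tensored up to $k$.

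Next I would record the baseline calculation of the four corners: for each $c \geq 1$ and $m \geq 1$ one has
$$\mathbb{Z} \otimes B^{\operatorname{cy}}(\langle t^c \rangle; m) \simeq \mathbb{Z}[(\mathbb{T}/C_{m/c})_+]$$
when $c \mid m$, and is zero otherwise. This identifies the four corners of the displayed square as the weight-$m$ cyclic bars of the cyclic submonoids $\langle t^{ab} \rangle, \langle t^a \rangle, \langle t^b \rangle, \langle t \rangle$, with structure maps induced by the inclusions of these submonoids. The content of the theorem is then that the cone of the inclusion $B^{\operatorname{cy}}(\langle t^a, t^b\rangle; m) \hookrightarrow B^{\operatorname{cy}}(\langle t\rangle; m)$ is the total cofiber of this $2 \times 2$ square, shifted by $[2\ell(a,b,m)]$.

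To exhibit this equivalence I would exploit the hypersurface presentation of $A$: the single relation $f = y^a - x^b$ gives rise to a Wodzicki--Rodicio periodic resolution of $A$ over $A \otimes_k A^{\operatorname{op}}$ with a divided-power generator $\eta$ of homological degree $2$ for each nontrivial Koszul syzygy, and hence, in weight $m$, exactly one generator for each pair of positive integers $(i,j)$ with $ai + bj = m$. The associated Hochschild bicomplex therefore contains, in weight $m$, a tower of $\ell(a,b,m)$ divided-power classes contributing the stated shift by $[2\ell(a,b,m)]$, while the remaining ``boundary'' factorizations (those with $i = 0$, $j = 0$, or both) cut out precisely the four corners $\mathbb{T}/C_m$, $\mathbb{T}/C_{m/a}$, $\mathbb{T}/C_{m/b}$, $\mathbb{T}/C_{m/ab}$ of the square. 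Matching the differentials of this bicomplex with the canonical projections between the $\mathbb{T}/C_{m/c}$ then identifies the answer with the required total cofiber.

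The main obstacle is promoting the BACH--Larsen output, which is an equivalence of mixed complexes, to the equivalence in $D(\mathbb{Z})^{B\mathbb{T}}$ demanded by the statement. Because each $\mathbb{T}/C_{m/c}$ is a free $\mathbb{T}$-set with finite stabilizer, the associated chain complex is formal over $\mathbb{Z}[\mathbb{T}]$, and consequently all higher $\mathbb{T}$-coherences of both sides are rigidly determined by their underlying mixed complex data; the verification then reduces to checking that the Connes operator $B$ on the Koszul bicomplex produced by BACH--Larsen matches the canonical projections in the square, which is a direct bookkeeping computation.
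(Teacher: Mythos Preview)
The paper does not give a self-contained proof of this theorem: it cites the Buenos Aires group and Larsen for the mixed-complex computation and then explains, in one paragraph, why a statement about mixed complexes is the same as a statement in $D(\mathbb{Z})^{B\mathbb{T}}$. Your sketch of the underlying hypersurface/Koszul computation is a reasonable outline of what those references do, and goes into more detail than the paper itself. The one place where your approach diverges substantively from the paper, and where it has a genuine gap, is your final paragraph on promoting the result from mixed complexes to $D(\mathbb{Z})^{B\mathbb{T}}$.

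First a minor slip: $\mathbb{T}/C_{m/c}$ is not a ``free $\mathbb{T}$-set with finite stabilizer''---a free action has trivial stabilizers; you mean a transitive $\mathbb{T}$-space with finite isotropy. More importantly, your argument that ``all higher $\mathbb{T}$-coherences of both sides are rigidly determined by their underlying mixed complex data'' because the \emph{objects} are formal is not the right mechanism, and as stated it does not suffice: formality of each object in a diagram does not by itself rigidify the maps or the coherences between them. The paper's route is cleaner and is what actually makes the lift automatic. Namely, the $\mathbb{E}_1$-algebra $C_*(\mathbb{T},\mathbb{Z})$ is a Postnikov truncation of a free $\mathbb{E}_1$-algebra over $\mathbb{Z}$ and is therefore formal as an $\mathbb{E}_1$-algebra; this yields an equivalence of $\infty$-categories between $D(\mathbb{Z})^{B\mathbb{T}}$ and dg-modules over $H_*(\mathbb{T},\mathbb{Z}) = \mathbb{Z}[d]/(d^2)$, i.e.\ mixed complexes. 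Once you have this equivalence of \emph{categories}, any equivalence (or diagram) of mixed complexes transports wholesale to $D(\mathbb{Z})^{B\mathbb{T}}$, with no further object-by-object formality check required. Replacing your last paragraph with this argument closes the gap.
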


The result is not stated in this from in op.~cit., and therefore, some
explanation 
is in order. The $\infty$-category $D(\mathbb{Z})^{B\hspace{.5pt}\mathbb{T}}$ is
equivalent to the $\infty$-category of modules over the
$\mathbb{E}_1$-algebra $C_*(\mathbb{T},\mathbb{Z})$ given by the
singular chains on the circle. This $\mathbb{E}_1$-algebra, in turn,
is a Postnikov section of a free $\mathbb{E}_1$-algebra over
$\mathbb{Z}$, and therefore, it is formal in the sense that, as an
$\mathbb{E}_1$-algebra over $\mathbb{Z}$, it is equivalent to the
Pontryagin ring $H_*(\mathbb{T},\mathbb{Z})$ given by its homology. As
a model for the $\infty$-category of modules over the latter, we may
use the dg-category of dg-modules over
$H_*(\mathbb{T},\mathbb{Z})$. But a dg-module over
$H_*(\mathbb{T},\mathbb{Z}) = \mathbb{Z}[d]/(d^2)$ is precisely what
is called a mixed complex in op.~cit. This shows that
$D(\mathbb{Z})^{B\hspace{.5pt}\mathbb{T}}$ is equivalent to the $\infty$-category
of mixed complexes. Some additional translation is necessary to bring the
results in the above form for which we refer to~\cite[Section~5]{h10}.

In the following, we will use the abbreviation
$$B_m = B^{\operatorname{cy}}(\langle t \rangle, \langle t^a,t^b \rangle;m).$$ 
Theorem~\ref{thm:buenosaires} shows, in particular, that the
connectivity of $B_m$ tends to infinity with $m$. Hence, tensoring
with $\operatorname{THH}(k)$, we obtain an equivalence
$$\operatorname{THH}(k) \otimes B^{\operatorname{cy}}(\langle t
\rangle, \langle t^a,t^b \rangle) \simeq \textstyle{ \bigoplus_{m \in
    \mathbb{Z}_{>0}} \operatorname{THH}(k) \otimes B_m }
\simeq \textstyle{ \prod_{m \in \mathbb{Z}_{>0}} \operatorname{THH}(k)
  \otimes B_m, }$$
which, in turn, implies a product decomposition
$$\textstyle{ (\operatorname{THH}(k) \otimes
  B^{\operatorname{cy}}(\langle t \rangle, \langle t^a,t^b
  \rangle))^{tC_p} \simeq \prod_{m \in \mathbb{Z}_{>0}}
  (\operatorname{THH}(k) \otimes B_m)^{tC_p}. }$$
We will use the following result repeatedly below.

\begin{lemma}\label{lem:frobeniusmap}The unstable Frobenius induces an
equivalence  
$$\begin{xy}
(0,0)*+{ \operatorname{THH}(k)^{tC_p} \otimes B_{m/p} }="1";
(44,0)*+{ (\operatorname{THH}(k) \otimes B_m)^{tC_p}, }="2";
{ \ar^-(.48){\operatorname{id} \otimes \tilde{\varphi}} "2";"1";};
\end{xy}$$
where the left-hand side is understood to be zero if $p$ does not
divide $m$. 
\end{lemma}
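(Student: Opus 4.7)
The plan is to reduce, via Theorem~\ref{thm:buenosaires}, to the analogous statement for the free monoid $\langle t\rangle$, and then to handle the free case by direct analysis of the $C_p$-action on the orbit spaces $\mathbb{T}/C_n$.

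Theorem~\ref{thm:buenosaires} exhibits $\mathbb{Z}\otimes B_m$ as the shift by $2\ell(a,b,m)$ of the total cofiber of a square with corners $\mathbb{Z}\otimes(\mathbb{T}/C_{m/c})_+$ for $c\in\{1,a,b,ab\}$, and similarly $\mathbb{Z}\otimes B_{m/p}$ as the shift by $2\ell(a,b,m/p)$ of the analogous square with corners $\mathbb{Z}\otimes(\mathbb{T}/C_{m/(cp)})_+$. Since $(-)^{tC_p}$, tensoring with $\operatorname{THH}(k)$ or with $\operatorname{THH}(k)^{tC_p}$, and the Frobenius all preserve finite colimits and shifts, and since $\operatorname{THH}(k)^{tC_p}$ is $2$-periodic for a perfect $\mathbb{F}_p$-algebra $k$ (via its Bott element), the discrepancy between the two shifts is absorbed on both sides. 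It therefore suffices to establish, for each relevant $n$, the corner-wise equivalence
$$\operatorname{THH}(k)^{tC_p}\otimes(\mathbb{T}/C_{n/p})_+\xrightarrow{\sim}(\operatorname{THH}(k)\otimes(\mathbb{T}/C_n)_+)^{tC_p}$$
induced by the unstable Frobenius, with the left-hand side zero when $p\nmid n$.

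I would then split into two cases. If $p\nmid n$, then $C_p\subset\mathbb{T}$ acts freely on $\mathbb{T}/C_n$, so $(\mathbb{T}/C_n)_+$ and hence also $\operatorname{THH}(k)\otimes(\mathbb{T}/C_n)_+$ are induced as $C_p$-spectra, and therefore have vanishing Tate construction. If $p\mid n$, then $C_p\subset C_n$ acts trivially on $\mathbb{T}/C_n$. Since $(\mathbb{T}/C_n)_+$ is a compact spectrum with trivial $C_p$-action, the lax-monoidal map $\operatorname{THH}(k)^{tC_p}\otimes(\mathbb{T}/C_n)_+\to(\operatorname{THH}(k)\otimes(\mathbb{T}/C_n)_+)^{tC_p}$ is an equivalence, by the standard argument that both sides are exact functors of the second factor that agree on the sphere spectrum. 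Finally, under the $p$th root isomorphism $\rho\colon\mathbb{T}\xrightarrow{\sim}\mathbb{T}/C_p$, whose inverse $p$th power map carries $C_n$ onto $C_{n/p}$ with kernel $C_p$, the $\mathbb{T}$-space $\mathbb{T}/C_n$ with action through $\rho$ is identified with $\mathbb{T}/C_{n/p}$ with its standard $\mathbb{T}$-action, matching the source of the desired map.

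The main obstacle I expect is showing that the abstract equivalence produced above is indeed the one induced by the unstable Frobenius $\tilde{\varphi}$, together with the compatible bookkeeping of the Bott-element shifts. On the free monoid, $\tilde{\varphi}$ arises from the $p$-fold diagonal in the cyclotomic structure and carries the weight-$(n/p)$ component to the weight-$n$ component via the $p$th power map on $\mathbb{T}$; one must track through the description of $B_m$ in Theorem~\ref{thm:buenosaires} to verify that the induced map on each corner really realizes the identification above, up to the correct power of the Bott element, and not some other twist unaccounted for. Once this naturality check is in place, assembling the corners via the total cofiber yields the lemma.
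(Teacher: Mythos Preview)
Your approach differs substantially from the paper's. The paper does \emph{not} invoke Theorem~\ref{thm:buenosaires} here; instead it quotes from~\cite[Section~3]{h10} two geometric facts about $B_m$ as a pointed $C_p$-space: (i)~$B_{m/p}$ is finite, and (ii)~the cofiber of the composite $B_{m/p}\xrightarrow{\tilde\varphi}(B_m)^{hC_p}\to B_m$ is a finite colimit of free pointed $C_p$-cells. The map in the lemma then factors as
\[
\operatorname{THH}(k)^{tC_p}\otimes B_{m/p}\;\longrightarrow\;(\operatorname{THH}(k)\otimes B_{m/p})^{tC_p}\;\longrightarrow\;(\operatorname{THH}(k)\otimes B_m)^{tC_p},
\]
the first arrow being the canonical interchange map (an equivalence by~(i), since $(-)^{tC_p}$ commutes with the finite colimit $B_{m/p}$) and the second being induced by the composite above (an equivalence by~(ii), since free $C_p$-cells vanish under $(-)^{tC_p}$). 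No shifts, no Bott elements, and no decomposition into orbit pieces are needed.

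Your route via Theorem~\ref{thm:buenosaires} is plausible in outline, but the ``main obstacle'' you flag is more than bookkeeping. Theorem~\ref{thm:buenosaires} is a statement in $D(\mathbb{Z})^{B\hspace{.5pt}\mathbb{T}}$ about the $\mathbb{T}$-equivariant homotopy type of $\mathbb{Z}\otimes B_m$; it says nothing about how the unstable Frobenius relates the squares for $m$ and for $m/p$. To argue corner by corner you must show that the identification of Theorem~\ref{thm:buenosaires} intertwines the $p$th-power map on the cyclic bar construction, and that your four corner equivalences assemble into a map of squares rather than four unrelated equivalences. Checking this forces you back to an explicit model for $B_m$, at which point the paper's facts~(i) and~(ii) are already visible and give the lemma in two lines. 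The same remark applies to the shift discrepancy $2\ell(a,b,m)-2\ell(a,b,m/p)$: the $2$-periodicity of $\operatorname{THH}(k)^{tC_p}$ tells you only that \emph{some} equivalence exists after reshifting, not that the specific map induced by $\tilde\varphi$ supplies exactly the right power of the Bott class; that is genuine additional content not contained in Theorem~\ref{thm:buenosaires}.
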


\begin{proof}We recall two facts from~\cite[Section~3]{h10}. The first
is that the pointed space $B_{m/p}$ is finite, and the second is that
the cofiber of the composition
$$\xymatrix{
{ B_{m/p} } \ar[r]^-{\tilde{\varphi}} &
{ (B_m)^{hC_p} } \ar[r] &
{ B_m } \cr
}$$
of the unstable Frobenius map and the canonical ``inclusion'' is a finite
colimit of free pointed $C_p$-cells. Here $C_p$ acts trivially on the
left-hand and middle terms. Now, the map in the statement factors as
the composition
$$\xymatrix{
{ \operatorname{THH}(k)^{tC_p} \otimes B_{m/p} } \ar[r] &
{ (\operatorname{THH}(k) \otimes B_{m/p})^{tC_p} } \ar[r] &
{ (\operatorname{THH}(k) \otimes B_m)^{tC_p} } \cr
}$$
of the canonical colimit interchange map, where $B_{m/p}$ is equipped
with the trivial $C_p$-action, and the map of Tate spectra induced
from the composite map above. The first fact implies that the
left-hand map is an equivalence, and the second fact implies that the
right-hand map is an equivalence.
\end{proof}

\begin{proposition}\label{prop:duality}Let $G$ be a compact Lie group, let
$H \subset G$ be a closed subgroup, let $\lambda = T_H(G/H)$ be the
tangent space at $H = eH$ with the adjoint left $H$-action, and let
$S^{\lambda}$ the one-point compactification of $\lambda$. For every 
spectrum with $G$-action $X$, there are canonical natural equivalences
$$\begin{aligned}
(X \otimes (G/H)_+)^{hG} & \simeq (X \otimes S^{\lambda})^{hH}, \cr
(X \otimes (G/H)_+)^{tG} & \simeq (X \otimes S^{\lambda})^{tH}. \cr
\end{aligned}$$
\end{proposition}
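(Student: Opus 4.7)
The plan is to identify $(G/H)_+$ as a dualizable object of $\operatorname{Sp}^{BG}$, compute its dual via equivariant Atiyah duality, and then extract both equivalences from formal adjunction manipulations. The essential geometric input is Atiyah duality for the compact smooth $G$-manifold $G/H$, realized by a Pontryagin--Thom collapse on the diagonal $G/H \hookrightarrow G/H \times G/H$ whose normal bundle is $T_{G/H} = G \times_H \lambda$; this gives
$$D\bigl((G/H)_+\bigr) \simeq (G/H)^{-T_{G/H}} \simeq G_+ \wedge_H S^{-\lambda} = \operatorname{Ind}_H^G(S^{-\lambda})$$
in $\operatorname{Sp}^{BG}$, or equivalently the Wirthm\"uller identification $\operatorname{Ind}_H^G Y \simeq \operatorname{coInd}_H^G(Y \otimes S^\lambda)$ relating the two adjoints of restriction along $H \hookrightarrow G$.

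For the first equivalence of the proposition, I would combine the shearing equivalence $X \otimes (G/H)_+ \simeq \operatorname{Ind}_H^G(X|_H)$ of $G$-spectra (an elementary check on representatives) with the Wirthm\"uller identification to obtain $X \otimes (G/H)_+ \simeq \operatorname{coInd}_H^G(X|_H \otimes S^\lambda)$. Applying $(-)^{hG}$ and using the adjunction $\operatorname{Res}_H^G \dashv \operatorname{coInd}_H^G$ then yields
$$\bigl(X \otimes (G/H)_+\bigr)^{hG} \simeq \bigl(\operatorname{coInd}_H^G(X|_H \otimes S^\lambda)\bigr)^{hG} \simeq (X \otimes S^\lambda)^{hH}.$$
For the Tate version, the same identification combined with the compatibility $(\operatorname{coInd}_H^G W)^{tG} \simeq W^{tH}$ for closed subgroups of compact Lie groups gives the desired equivalence. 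This compatibility can be verified through the Tate cofiber sequence for a compact Lie group $G$ of Lie algebra $\mathfrak{g}$, together with the formula $(\operatorname{Ind}_H^G Y)_{hG} \simeq Y_{hH}$ (which is immediate since induction is left adjoint to restriction and both $(-)_{hG}$, $(-)_{hH}$ are colimits), and the decomposition $\mathfrak{g} \cong \mathfrak{h} \oplus \lambda$ of $H$-representations, which is precisely the shift needed to line up the $G$-Tate cofiber sequence for $X \otimes (G/H)_+$ with the $H$-Tate cofiber sequence for $X \otimes S^\lambda$.

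The main obstacle is the equivariant Atiyah duality itself, or equivalently the Wirthm\"uller identification of induction with coinduction up to $S^\lambda$; this is the only genuinely geometric step in the argument, and is established by producing the evaluation and coevaluation maps from the Pontryagin--Thom construction and verifying the triangle identities. All remaining steps are formal consequences of dualizability together with the standard tensor--hom and restriction--induction adjunctions.
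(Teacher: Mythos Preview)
Your proposal is correct and follows essentially the same strategy as the paper: the paper phrases the key geometric input as parametrized Atiyah duality for the bundle $f \colon BH \to BG$, giving $f_!(Y \otimes S^{-\lambda}) \simeq f_*(Y)$, which is precisely your Wirthm\"uller identification $\operatorname{Ind}_H^G Z \simeq \operatorname{coInd}_H^G(Z \otimes S^{\lambda})$, and then substitutes $Y = f^*(X) \otimes S^{\lambda}$. For the Tate statement the paper invokes the universal property of the Tate construction from \cite[Theorem~I.4.1~(3)]{nikolausscholze} (checking vanishing on $Y = \Sigma^{\infty}H_+$ and colimit preservation of the fiber of $\operatorname{can}$) rather than matching the norm cofiber sequences directly via $\mathfrak{g} \cong \mathfrak{h} \oplus \lambda$ as you do, but this is a minor variation on the same idea.
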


\begin{proof}We recall that for every map of spaces
$f \colon S \to T$, the restriction functor
$f^* \colon \operatorname{Sp}^{T} \to \operatorname{Sp}^S$ has both a
left adjoint $f_{!}$ and a right adjoint $f_*$. In the case of the unique
map $p \colon BG \to \operatorname{pt}$, we have
$p_{\hskip.5pt!}(X) \simeq X_{hG}$ and $p_*(X) \simeq
X^{hG}$. We now consider the following diagram of spaces.
$$\begin{xy}
(0,7)*+{ BH }="11";
(26,7)*+{ BG }="13";
(13,-7)*+{ \operatorname{pt} }="22";
{ \ar^-{f} "13";"11";};
{ \ar_-(.35){q} "22";"11";};
{ \ar^-(.35){p} "22";"13";};
\end{xy}$$
The top horizontal map is the map induced by the inclusion of $H$ in
$G$. It is a fiber bundle, whose fibers are compact
manifolds.\footnote{\,Strictly speaking this statement does not make
sense, since $BH$ and $BG$ are only defined as homotopy types. What
we mean is that the map is classified by a map $BG \to
B\operatorname{Diff}(G/H)$, where the latter is the diffeomorphism
group of the compact manifold $G/H$.} Therefore, by parametrized
Atiyah duality, its relative 
dualizing spectrum $Df \in \operatorname{Sp}^{BH}$ is given by the
sphere bundle associated with the fiberwise normal bundle, which is
$Df \simeq S^{-\lambda}$. By definition of the dualizing spectrum, we
have for all $Y \in \operatorname{Sp}^{BH}$, a natural equivalence
$$f_{!}(Y \otimes S^{-\lambda} ) \simeq f_*(Y)$$
in $\operatorname{Sp}^{BG}$. It follows that for all
$Y \in \operatorname{Sp}^{BH}$, we have a natural equivalence
$$p_*f_{!}(Y \otimes S^{-\lambda}) \simeq p_*f_*(Y) \simeq q_*(Y)$$
in $\operatorname{Sp}$, which we also write as
$$((Y \otimes S^{-\lambda}) \otimes_H G_+)^{hG} \simeq Y^{hH}.$$
By~\cite[Theorem~I.4.1~(3)]{nikolausscholze}, we further deduce a
natural equivalence
$$((Y \otimes S^{-\lambda}) \otimes_H G_+)^{tG} \simeq Y^{tH}.$$
Indeed, the left-hand side vanishes for $Y = \Sigma^\infty H_+$,
and the fiber of the map
$$\begin{xy}
(0,0)*+{ ((Y \otimes S^{-\lambda})\otimes_H G_+)^{hG} }="1";
(47,0)*+{ ((Y \otimes S^{-\lambda})\otimes_H G_+)^{tG} }="2";
{ \ar^-{\operatorname{can}} "2";"1";};
\end{xy}$$
preserves colimits in $Y$.

Finally, given $X \in \operatorname{Sp}^{BG}$, we set 
$Y = f^*(X) \otimes S^{\lambda}$ to obtain the equivalences in
the statement.
\end{proof}

\section{Proof of Theorem~\ref{thm:cusps}}

We consider the diagram with horizontal equalizers
$$\begin{xy}
(0,7)*+{ \operatorname{TC}(A) }="11";
(24,7)*+{ \operatorname{TC}^{-}(A) }="12";
(48,7)*+{ \operatorname{TP}(A) }="13";
(0,-7)*+{ \operatorname{TC}(B) }="21";
(24,-7)*+{ \operatorname{TC}^{-}(B) }="22";
(48,-7)*+{ \operatorname{TP}(B) }="23";
{ \ar "12";"11";};
{ \ar@<.7ex>^-{\varphi} "13";"12";};
{ \ar@<-.7ex>_-{\operatorname{can}} "13";"12";};
{ \ar "22";"21";};
{ \ar "21";"11";};
{ \ar "22";"12";};
{ \ar "23";"13";};
{ \ar@<.7ex>^-{\varphi} "23";"22";};
{ \ar@<-.7ex>_-{\operatorname{can}} "23";"22";};
\end{xy}$$
and wish to evaluate the cofiber of the left-hand vertical map. We have
$$\textstyle{ \operatorname{cofiber}(\operatorname{THH}(A) \to
  \operatorname{THH}(B)) \simeq \bigoplus_{m \geq 1}
  \operatorname{THH}(k) \otimes B_m \simeq \prod_{m \geq 1}
  \operatorname{THH}(k) \otimes B_m. }$$
The right-hand equivalence follows from the fact that the connectivity of
$B_m$ tends to infinity with $m$. Therefore, the cofiber of
$\operatorname{TC}(A) \to \operatorname{TC}(B)$ is identified with the
equalizer of the induced maps
$$\xymatrix{
{ \prod_{m \geq 1} (\operatorname{THH}(k) \otimes B_m)^{h\mathbb{T}} }
\ar@<.7ex>[r]^{\varphi} \ar@<-.7ex>[r]_-{\operatorname{can}} &
{ \prod_{m \geq 1} (\operatorname{THH}(k) \otimes B_m)^{t\mathbb{T}}. } \cr
}$$
While the canonical map ``$\operatorname{can}$'' preserves this product
decomposition, the Frobenius map ``$\varphi$'' takes the factor indexed by
$m$ to the factor factor indexed $pm$. Therefore, we write $m = p^vm'$ with
$m'$ not divisible by $p$ and rewrite the diagram as
$$\xymatrix{
{ \prod_{m'} \prod_{v \geq 0} (\operatorname{THH}(k) \otimes
  B_{p^vm'})^{h\mathbb{T}} } \ar@<.7ex>[r]^{\varphi}
\ar@<-.7ex>[r]_-{\operatorname{can}} &
{ \prod_{m'} \prod_{v \geq 0} (\operatorname{THH}(k) \otimes
  B_m)^{t\mathbb{T}}, } \cr
}$$
where both maps now preserve the outer product decomposition indexed
by positive integers $m'$ not divisible by $p$. We will abbreviate
and write
$$\xymatrix{
{ \operatorname{TC}(m') } \ar[r] &
{ \operatorname{TC}^{-}(m') } \ar@<.7ex>[r]^-{\varphi}
\ar@<-.7ex>[r]_-{\operatorname{can}} &
{ \operatorname{TP}(m') } \cr
}$$
for the equalizer diagram given by the factors indexed by $m'$. To
complete the proof, we evaluate the induced diagram on homotopy
groups.

We fix $m = p^vm'$. It follows from Theorem~\ref{thm:buenosaires} that
$$\operatorname{THH}(k) \otimes B_m \simeq \operatorname{THH}(k)
\otimes_{\mathbb{Z}} (\mathbb{Z} \otimes B_m)$$
agrees, up to canonical equivalence, with the total cofiber of the
square\footnote{\,Here we use in an essential way that, 
as a spectrum with $\mathbb{T}$-action, $\operatorname{THH}(k)$ is a 
$\mathbb{Z}$-module.}
$$\xymatrix{
{ \operatorname{THH}(k) \otimes (\mathbb{T}/C_{m/ab})_+[2\ell(a,b,m)]
} \ar[r] \ar[d] &
{ \operatorname{THH}(k) \otimes (\mathbb{T}/C_{m/a})_+[2\ell(a,b,m)] }
\ar[d] \cr
{ \operatorname{THH}(k) \otimes (\mathbb{T}/C_{m/b})_+[2\ell(a,b,m)] }
\ar[r] &
{ \operatorname{THH}(k) \otimes (\mathbb{T}/C_m)_+[2\ell(a,b,m)]. }
\cr
}$$
By Proposition~\ref{prop:duality}, the induced square of
$\mathbb{T}$-homotopy fixed points takes the form
$$\xymatrix{
{ \operatorname{THH}(k)^{hC_{m/ab}}[2\ell(a,b,m)+1] } \ar[r] \ar[d] &
{ \operatorname{THH}(k)^{hC_{m/a}}[2\ell(a,b,m)+1] } \ar[d] \cr
{ \operatorname{THH}(k)^{hC_{m/b}}[2\ell(a,b,m)+1] } \ar[r] &
{ \operatorname{THH}(k)^{hC_m}[2\ell(a,b,m)+1] } \cr
}$$
with the maps in the diagram given by the  corestriction
maps on homotopy fixed points. Indeed, the adjoint representation 
$\lambda = T_{C_r}(\mathbb{T}/C_r)$ is a trivial one-dimensional real
$C_r$-representation. We now write $a = p^ua'$ with $a'$ not
divisible by $p$ and assume that $p$ does not divide $b$. If $a$ and
$b$ both do not divide $m$, then
$$\begin{aligned}
(\operatorname{THH}(k) \otimes B_m)^{h\mathbb{T}} &
\simeq \operatorname{THH}(k)^{hC_m}[2\ell(a,b,m)+1] \cr
{} & \simeq \operatorname{THH}(k)^{hC_{p^v}}[2\ell(a,b,m)+1], \cr
\end{aligned}$$
and if $a = p^ua'$ but not $b$ divides $m$, then
$$\begin{aligned}
(\operatorname{THH}(k) \otimes B_m)^{h\mathbb{T}} &
\simeq \operatorname{cofiber}( \operatorname{THH}(k)^{hC_{m/a}} \! \to
\operatorname{THH}(k)^{hC_m})[2\ell(a,b,m)+1] \cr
{} & \simeq \operatorname{cofiber}(
\operatorname{THH}(k)^{hC_{p^{v-u}}} \! \to
\operatorname{THH}(k)^{hC_{p^v}})[2\ell(a,b,m)+1]. \cr
\end{aligned}$$
Similarly, if $b$ but not $a$ divides $m$, then
$$\begin{aligned}
(\operatorname{THH}(k) \otimes B_m)^{h\mathbb{T}} &
\simeq \operatorname{cofiber}( \operatorname{THH}(k)^{hC_{m/b}} \! \to 
\operatorname{THH}(k)^{hC_m})[2\ell(a,b,m)+1] \cr
{} & \simeq \operatorname{cofiber}( \operatorname{THH}(k)^{hC_{p^v}}
\! \to \operatorname{THH}(k)^{hC_{p^v}})[2\ell(a,b,m)+1] \cr
{} & \simeq 0, \cr
\end{aligned}$$
and if $a$ and $b$ both divide $m$, then
$$(\operatorname{THH}(k) \otimes B_m)^{h\mathbb{T}} \simeq 0,$$
since $B_m \simeq 0$. By the same reasoning, we find that
$$(\operatorname{THH}(k) \otimes B_m)^{t\mathbb{T}} \simeq
\operatorname{THH}(k)^{tC_{p^v}}[2\ell(a,b,m)+1],$$
if $a$ and $b$ both do not divide $m$, that
$$(\operatorname{THH}(k) \otimes B_m)^{t\mathbb{T}} \simeq
\operatorname{cofiber}(\operatorname{THH}(k)^{tC_{p^{v-u}}} \!\!\to
\operatorname{THH}(k)^{tC_{p^v}})[2\ell(a,b,m)+1],$$
if $a = p^ua'$ divides $m$ but $b$ does not divide $m$, and that
$$(\operatorname{THH}(k) \otimes B_m)^{t\mathbb{T}} \simeq 0,$$
otherwise.

It follows from~\cite[Section~IV.4]{nikolausscholze} that, on homotopy
groups, the diagram
$$\begin{xy}
(0,7)*+{ \operatorname{THH}(k)^{hC_{p^u}} }="11";
(32,7)*+{ \operatorname{THH}(k)^{tC_{p^u}} }="12";
(0,-7)*+{ \operatorname{THH}(k)^{hC_{p^v}} }="21";
(32,-7)*+{ \operatorname{THH}(k)^{tC_{p^v}} }="22";
{ \ar^-{\operatorname{can}} "12";"11";};
{ \ar^-{\operatorname{can}} "22";"21";};
{ \ar^-{\operatorname{cor}} "21";"11";};
{ \ar^-{\operatorname{cor}} "22";"12";};
\end{xy}$$ 
becomes
$$\begin{xy}
(0,7)*+{ W(k)[t,x]/(tx-p,p^ut) }="11";
(50,7)*+{ W(k)[t^{\pm1},x]/(tx-p,p^ut)\phantom{,} }="12";
(0,-7)*+{ W(k)[t,x]/(tx-p,p^vt) }="21";
(50,-7)*+{ W(k)[t^{\pm1},x]/(tx-p,p^vt), }="22";
{ \ar "12";"11";};
{ \ar "22";"21";};
{ \ar "21";"11";};
{ \ar "22";"12";};
\end{xy}$$
where $\deg(t) = -2$ and $\deg(x) = 2$, where the horizontal maps
are the unique graded $W(k)$-algebra homomorphisms that take $t$ to
$t$ and $x$ to $x = pt^{-1}$, and where the vertical maps are the
unique maps of graded $W(k)[t,x]$-modules that take $1$ to $p^{v-u}$.

We have now determined the diagram
$$\xymatrix{
{ \operatorname{TC}(m') } \ar[r] &
{ \operatorname{TC}^{-}(m') } \ar@<.7ex>[r]^-{\varphi}
\ar@<-.7ex>[r]_-{\operatorname{can}} &
{ \operatorname{TP}(m') } \cr
}$$
at the level of homotopy groups, the Frobenius given by
Lemma~\ref{lem:frobeniusmap}. Hence, it is merely a matter of
bookkeeping to see that the statement of the theorem ensues. We recall
the functions $s = s(a,b,r,p,m')$ and $h = h(a,b,r,p,m')$ from the
$p$-typical decomposition recalled in the introduction,
$$\textstyle{
\mathbb{W}_S(k)/(V_a\mathbb{W}_{S/a}(k) +
V_b\mathbb{W}_{S/b}(k)) \simeq \prod_{m' \in \mathbb{N}'} W_h(k). }$$
Suppose first that neither $a'$ nor $b$ divides $m'$. Then
$$\begin{aligned}
\pi_{2r+1}((\operatorname{THH}(k) \otimes
B_{p^vm'})^{h\hskip.5pt\mathbb{T}}) {} & \simeq \begin{cases}
W_{v+1}(k), & \text{if $0 \leq v < s$,} \cr
W_v(k), & \text{if $s \leq v$,} \cr
\end{cases} \cr
\pi_{2r+1}((\operatorname{THH}(k) \otimes
B_{p^vm'})^{t\hskip.5pt\mathbb{T}}) 
{} & \simeq W_v(k), \cr
\end{aligned}$$
with $s = s(a,b,r,p,m')$. Also, we remark that the corresponding
homotopy groups in even degree $2r$ are zero. The Frobenius map
$$\begin{xy}
(0,0)*+{ \pi_{2r+1}((\operatorname{THH}(k) \otimes
  B_{p^vm'})^{h\hskip.5pt\mathbb{T}}) }="1";
(56,0)*+{ \pi_{2r+1}((\operatorname{THH}(k) \otimes
  B_{p^{v+1}m'})^{t\hskip.5pt\mathbb{T}}) 
}="2";
{ \ar^-{\varphi} "2";"1";};
\end{xy}$$
is an isomorphism for $0 \leq v < s$, and the canonical map
$$\begin{xy}
(0,0)*+{ \pi_{2r+1}((\operatorname{THH}(k) \otimes
  B_{p^vm'})^{h\hskip.5pt\mathbb{T}}) }="1"; 
(55,0)*+{ \pi_{2r+1}((\operatorname{THH}(k) \otimes
  B_{p^vm'})^{t\hskip.5pt\mathbb{T}}) 
}="2";
{ \ar^-{\operatorname{can}} "2";"1";};
\end{xy}$$
is an isomorphism for $s \leq v$. Hence, we have a map of exact
sequences
$$\xymatrix{
{ 0 } \ar[r] &
{ \prod_{s \leq v} W_v(k) } \ar[r]
\ar[d]^-{\varphi-\operatorname{can}} &
{ \operatorname{TC}_{2r+1}^{-}(m') } \ar[r]
\ar[d]^-{\varphi-\operatorname{can}} &
{ \prod_{0 \leq v < s} W_{v+1}(k) } \ar[r]
\ar[d]^-(.43){\overline{\varphi-\operatorname{can}}} &
{ 0\phantom{,} } \cr
{ 0 } \ar[r] &
{ \prod_{s \leq v} W_v(k) } \ar[r] &
{ \operatorname{TP}_{2r+1}(m') } \ar[r] &
{ \prod_{0 \leq v < s} W_v(k) } \ar[r] &
{ 0, } \cr
}$$
where the left-hand vertical map is an isomorphism, and where the
right-hand vertical map is an epimorphism with kernel $W_s(k)$. Since
$h = s$, we conclude that $\operatorname{TC}_{2r+1}(m') \simeq W_h(k)$
and $\operatorname{TC}_{2r}(m') \simeq 0$ as desired.

Suppose next that $a'$ but not $b$ divides $m'$. If $u \leq s$, then
$$\begin{aligned}
\pi_{2r+1}((\operatorname{THH}(k) \otimes
B_{p^vm'})^{h\hskip.5pt\mathbb{T}})
{} & \simeq \begin{cases}
W_{v+1}(k), & \text{if $0 \leq v < u$,} \cr
W_u(k), & \text{if $u \leq v$,} \cr
\end{cases} \cr
\pi_{2r+1}((\operatorname{THH}(k) \otimes
B_{p^vm'})^{t\hskip.5pt\mathbb{T}})
{} & \simeq \begin{cases}
W_v(k), & \text{if $0 \leq v < u$,} \cr
W_u(k), & \text{if $u \leq v.$ } \cr
\end{cases}
\end{aligned}$$
Moreover, we see as before that the Frobenius and canonical maps are
isomorphisms for $0 \leq v < s$ and $s \leq v$, respectively, so we
have a map of exact sequences
$$\xymatrix{
{ 0 } \ar[r] &
{ \prod_{s \leq v} W_u(k) } \ar[r]
\ar[d]^-{\varphi-\operatorname{can}} &
{ \operatorname{TC}_{2r+1}^{-}(m') } \ar[r]
\ar[d]^-{\varphi-\operatorname{can}} &
{ \prod_{0 \leq v < s} W_{c}(k) } \ar[r]
\ar[d]^-(.43){\overline{\varphi-\operatorname{can}}} &
{ 0\phantom{,} } \cr
{ 0 } \ar[r] &
{ \prod_{s \leq v} W_u(k) } \ar[r] &
{ \operatorname{TP}_{2r+1}(m') } \ar[r] &
{ \prod_{0 \leq v < s} W_d(k) } \ar[r] &
{ 0, } \cr
}$$
where $c = \min\{u,v+1\}$ and $d = \min\{u,v\}$. The left-hand
vertical map is an isomorphism, and the right-hand vertical map is an
epimorphism with kernel $W_u(k)$, so again
$\operatorname{TC}_{2r+1}(m') \simeq W_h(k)$, since $h = u$, and
$\operatorname{TC}_{2r}(m') \simeq 0$.

If $s < u$, then
$$\begin{aligned}
\pi_{2r+1}((\operatorname{THH}(k) \otimes
B_{p^vm'})^{h\hskip.5pt\mathbb{T}})
{} & \simeq \begin{cases}
W_{v+1}(k), & \text{if $0 \leq v < s$,} \cr
W_v(k), & \text{if $s \leq v < u$,} \cr
W_u(k), & \text{if $u \leq v$,} \cr
\end{cases} \cr
\pi_{2r+1}((\operatorname{THH}(k) \otimes
B_{p^vm'})^{t\hskip.5pt\mathbb{T}})
{} & \simeq \begin{cases}
W_v(k), & \text{if $0 \leq v < u$,} \cr
W_u(k), & \text{if $u \leq v,$ } \cr
\end{cases}
\end{aligned}$$
and a similar argument shows that
$\operatorname{TC}_{2r+1}(m') \simeq W_h(k)$, since $h = s$, and that
$\operatorname{TC}_{2r}(m') \simeq 0$.

Finally, if $b$ divides $m'$, then $\operatorname{TC}_{2r+1}^{-}(m')$ and
$\operatorname{TP}_{2r+1}(m')$ are both zero, and therefore, so is
$\operatorname{TC}_{2r+1}(m')$ and $\operatorname{TC}_{2r}(m')$. This
completes the proof.

\begin{remark}As a pointed space with $\mathbb{T}$-action, the
homotopy type of $B_m$ was described conjecturally
in~\cite[Conjecture~B]{h10} and this conjecture was affirmed by
Angeltveit in~\cite[Theorem~2.1]{angeltveit}. The result is that $B_m$
is equivalent to the total cofiber of a square of pointed spaces with
$\mathbb{T}$-action
$$\xymatrix{
{ (\mathbb{T}/C_{m/ab})_+ \wedge S^{\lambda(a,b,m)} } \ar[r]
\ar[d] &
{ (\mathbb{T}/C_{m/a})_+ \wedge S^{\lambda(a,b,m)} } \ar[d] \cr
{ (\mathbb{T}/C_{m/b})_+ \wedge S^{\lambda(a,b,m)} } \ar[r] &
{ (\mathbb{T}/C_m)_+ \wedge S^{\lambda(a,b,m)}. } \cr
}$$
So the trivial shift $[2\ell(a,b,m)]$ that appears in
Theorem~\ref{thm:buenosaires} is replaced by a shift by a non-trivial
representation $\lambda(a,b,m)$ of real dimension $2\ell(a,b,m)$. 
\end{remark}

\bibliographystyle{amsplain}

\end{document}